\DeclareMathOperator{\Rank}{rank}
\DeclareMathOperator{\Span}{span}
\newcommand{\diff}{\mathop{}\!d}
\protected\def\verythinspace{%
  \ifmmode
    \mskip0.5\thinmuskip
  \else
    \ifhmode
      \kern0.08334em
    \fi
  \fi
}
\theoremstyle{plain}
\newtheorem{theorem}{Theorem}[section]
\newtheorem{proposition}[theorem]{Proposition}
\newtheorem{lemma}[theorem]{Lemma}
\theoremstyle{remark}
\newtheorem{remark}[theorem]{Remark}
\newtheorem*{remark*}{Remark}
\newtheorem*{remarks*}{Remarks}
\theoremstyle{definition}
\newtheorem{definition}[theorem]{Definition}
\newtheorem*{notation*}{Notation}
\newtheorem*{example*}{Example}
\begin{document}
\title[Rank-one submanifolds]{Classification of rank-one submanifolds}
\author{Matteo Raffaelli}
\address{Institute of Discrete Mathematics and Geometry\\
TU Wien\\
Wiedner Hauptstra{\ss}e 8-10/104\\
1040 Vienna\\
Austria}
\email{matteo.raffaelli@tuwien.ac.at}
\thanks{This work was supported by Austrian Science Fund (FWF) project F~77 (SFB ``Advanced Computational Design'').}
\date{August 21, 2023}
\subjclass[2020]{Primary 53A07; Secondary 53B25, 53C40, 53C42}
\keywords{Flat metric, constant nullity, ruled submanifold, striction submanifold}
\begin{abstract}
We study ruled submanifolds of Euclidean space. First, to each (parametrized) ruled submanifold $\sigma$, we associate an integer-valued function, called \textit{degree}, measuring the extent to which $\sigma$ fails to be cylindrical. In particular, we show that if the degree is constant and equal to $d$, then the singularities of $\sigma$ can only occur along an $(m-d)$-dimensional ``striction'' submanifold. This result allows us to extend the standard classification of developable surfaces in $\mathbb{R}^{3}$ to the whole family of flat and ruled submanifolds without planar points, also known as \textit{rank-one}: an open and dense subset of every rank-one submanifold is the union of \emph{cylindrical}, \emph{conical}, and \emph{tangent} regions.
\end{abstract}
\maketitle
\tableofcontents

\section{Introduction and main result}
Developable surfaces in $\mathbb{R}^{3}$ are classical objects in differential geometry, enjoying a rich collection of properties. For instance, they have zero Gaussian curvature; they are envelopes of one-parameter families of planes; they are ruled, with tangent plane stable along the rulings.

Developable surfaces started to be studied in depth during the eighteenth century (even before the term ``differential geometry" was coined~\cite{lawrence2011}), when Leonhard Euler and Gaspard Monge got interested in their properties and classification. Their efforts led to the following, nowadays well-known, theorem.

\begin{theorem}[{\cite[section~3.24]{kuehnel2015}}]\label{DevelopablesClassification}
An open and dense subset of every developable surface is a union of planar regions, cones, cylinders, and tangent surfaces of space curves.
\end{theorem}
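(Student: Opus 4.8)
The plan is to stratify the surface by the rank of its second fundamental form and then, on the rank-one part, to read off the three non-planar types from the behaviour of the striction curve of the ruling. Let $S$ be a developable surface, that is, a ruled surface with vanishing Gaussian curvature, and let $S_0 \subseteq S$ be the closed set of planar points, where the second fundamental form vanishes identically; set $S_1 = S \setminus S_0$. Since $K \equiv 0$ forces the second fundamental form to have rank at most one, on the open set $S_1$ the shape operator has rank exactly one, and its kernel is a smooth line field whose integral curves are straight segments (the totally geodesic nullity leaves); these coincide with the rulings. The interior $\mathring{S_0}$ supplies the planar regions, and since the boundary $S_0 \setminus \mathring{S_0}$ of the closed set $S_0$ has empty interior, the set $\mathring{S_0} \cup S_1$ is already open and dense in $S$. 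It therefore suffices to exhibit an open dense subset of $S_1$ of the required form.

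Next I would parametrize a neighbourhood in $S_1$ as $\sigma(s,t) = \gamma(s) + t\,X(s)$, with $X$ the unit ruling direction and $\gamma$ a transversal directrix, and pass to the striction curve
\[
\beta(s) = \gamma(s) - \frac{\langle \gamma'(s), X'(s)\rangle}{\langle X'(s), X'(s)\rangle}\,X(s),
\]
defined wherever $X' \neq 0$ and characterized by $\langle \beta', X'\rangle = 0$; replacing $\gamma$ by $\beta$ keeps the same ruling field $X$. The developability condition reads $\det(\beta', X, X') = 0$, i.e.\ $\beta' \in \Span(X, X')$. The decisive computation is that, writing $\beta' = aX + bX'$ and using $\langle X, X'\rangle = 0$ (from $|X|\equiv 1$) together with $\langle \beta', X'\rangle = 0$, one finds $b\,|X'|^2 = 0$, hence $b = 0$ and $\beta' = aX$. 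Thus \emph{along the striction curve $\beta'$ is always proportional to $X$}, and this single fact drives the entire classification.

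With this in hand I would split $S_1$ into three pieces by the vanishing of $X'$ and $\beta'$: the interior of $\{X' = 0\}$, where the ruling direction is locally constant, giving a cylinder; the interior of $\{X' \neq 0,\ \beta' = 0\}$, where $\beta$ is locally a point through which all rulings pass, giving a cone; and the open set $\{X' \neq 0,\ \beta' \neq 0\}$, where $\beta' \parallel X$ exhibits $X$ as the unit tangent of $\beta$, so that $\sigma$ is the tangent surface of the space curve $\beta$. A short topological argument shows their union is dense in $S_1$: any nonempty open $W \subseteq S_1$ either lies in $\{X' = 0\}$, hence in the cylindrical interior, or meets $\{X' \neq 0\}$ in a nonempty open set, which in turn either lies in $\{\beta' = 0\}$, hence in the conical interior, or meets $\{\beta' \neq 0\}$, the tangent region. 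Combined with the planar interior $\mathring{S_0}$, this yields the desired open and dense decomposition.

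I expect the main obstacle to be the striction computation in the second paragraph together with the care required where $X'$ or $\beta'$ vanishes only on nowhere-dense sets: it is precisely because the transitions between the three types need not be tame that the regions must be defined as interiors of level sets rather than as the level sets themselves, and it is the topological density argument that makes the \emph{open and dense} conclusion honest in spite of these degenerate loci. The smoothness and straightness of the ruling field on $S_1$ (the identification of the nullity leaves with the rulings) is a standard flat-surface fact but should be recorded carefully, since the whole argument presupposes that $S_1$ is genuinely foliated by the line segments along which the tangent plane is stable.
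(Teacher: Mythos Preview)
The paper does not itself prove this statement; it is quoted from K\"uhnel as the classical motivation, and the paper's own contribution is the generalization in Theorem~\ref{MainTHM}. Your argument is correct and is essentially the $m=2$, $n=1$ specialization of the machinery the paper develops for that generalization: your key computation that $\beta' = aX$ (so $\beta'\wedge X = 0$) is exactly the surface case of Proposition~\ref{RankOneSingularPROP}, and your trichotomy on the vanishing of $X'$ and $\beta'$ matches the paper's split into cylindrical, conical, and tangent regions in section~\ref{ProofMainTHM}. If anything, your treatment of the ``open and dense'' clause---passing to interiors of the vanishing loci and giving the explicit density argument---is more careful than what the paper writes out for the general case, where this point is left largely implicit; and you handle the planar locus $S_0$ explicitly, whereas the paper's rank-one hypothesis excludes planar points by definition.
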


In arbitrary dimension and codimension, a broad generalization of the notion of developable surface (without planar points) is that of \emph{rank-one submanifold}. Let $M$ be an $m$-dimensional submanifold of $\mathbb{R}^{m+n}$. We say that $M$ is \textit{rank-one} if the rank of the Gauss map $M \to G(m, \mathbb{R}^{m+n})$ is constant and equal to one; equivalently, if $M$ is both flat and $(m-1)$-ruled.

Rank-one---and, more generally, constant-rank---submanifolds received a great deal of attention during the 1960s~\cite[p.~ix]{rovenskii1998}, yet they continue to be the subject of intensive study; see, e.g., \cite{raffaelli202x, onti2020, florit2017, vittone2012, dajczer2009, dajczer2004}.

It is therefore natural to ask whether a statement similar to that of Theorem~\ref{DevelopablesClassification} might hold for rank-one submanifolds~\cite{orourke2011}. In this paper, we answer such question affirmatively, by showing that there are only three types of rank-one submanifolds: \emph{cylindrical}, \emph{conical}, and \emph{tangent}.

\begin{definition}[Cylinder]\label{CylinderDEF}
Given a smooth unit-speed curve $\gamma \colon I \to \mathbb{R}^{m+n}$ and an $(m-1)$-tuple of orthonormal vectors $X_{1}, \dotsc, X_{m-1}$, the map
\begin{align*}
I \times \mathbb{R}^{m-1} &\to \mathbb{R}^{m+n}\\
(t, u^{1}, \dotsc, u^{m-1}) &\mapsto \gamma(t) + u^{1}X_{1} + \dotsb + u^{m-1} X_{m-1}
\end{align*}
is called a \textit{cylindrical submanifold} or, more simply, a \textit{cylinder}.
\end{definition}

\begin{definition}[Cone]
Let $U \subset \mathbb{R}^{m-1}$, and let $\psi \colon U \to \mathbb{R}^{m+n}$ be a smooth map whose differential has rank $m-2$ everywhere. Given a smooth unit vector field $X_{\psi}$ along $\psi$ such that
\begin{equation*}
X_{\psi} \notin \Span \mleft( \dfrac{\partial \psi}{\partial u^{1}}, \dotsc, \dfrac{\partial \psi}{\partial u^{m-1}}\mright),
\end{equation*}
the map
\begin{align*}
U \times \mathbb{R} &\to \mathbb{R}^{m+n}\\
(u^{1}, \dotsc, u^{m}) &\mapsto \psi(u^{1}, \dotsc, u^{m-1}) + u^{m}X_{\psi}(u^{1}, \dotsc, u^{m-1})
\end{align*}
is called a \textit{conical submanifold} or, more simply, a \textit{cone}.
\end{definition}

\begin{definition}[Tangent submanifold]
Let $\xi \colon U \to \mathbb{R}^{m+n}$ be an $(m-1)$-dimensional  \emph{regular} submanifold of $\mathbb{R}^{m+n}$. Given a smooth unit vector field $X_{\xi}$ along $\xi$ such that
\begin{equation*}
	\dfrac{\partial \xi}{\partial u^{1}} \wedge \dotsb \wedge \dfrac{\partial \xi}{\partial u^{m-1}} \wedge X_{\xi} = 0,
\end{equation*}
	the map
\begin{align*}
U \times \mathbb{R} &\to \mathbb{R}^{m+n}\\
(u^{1}, \dotsc, u^{m}) &\mapsto \xi(u^{1}, \dotsc, u^{m-1}) + u^{m}X_{\xi}(u^{1}, \dotsc, u^{m-1})
\end{align*}
is called a \textit{tangent submanifold of $\xi$}.
\end{definition}



\begin{theorem}\label{MainTHM}
An open and dense subset of every rank-one submanifold is a union of cylindrical, conical, and tangent submanifolds. Conversely, any cylindrical and any singular ruled submanifold of degree one, as defined in section \ref{Preliminaries}, is rank-one.
\end{theorem}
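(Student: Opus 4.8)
The plan is to establish the two assertions separately, treating the harder direct statement first. For it I would begin by covering the rank-one submanifold $M$ with local ruled parametrizations
$\sigma(t, u^{1}, \dotsc, u^{m-1}) = \gamma(t) + \sum_{i} u^{i} X_{i}(t)$, where $\{X_{i}(t)\}$ is an orthonormal frame of the ruling space $R(t) = \Span(X_{1}, \dotsc, X_{m-1})$, and I write $\sigma_{t} = \partial \sigma / \partial t$ and $\overline{\,\cdot\,}$ for the projection modulo $R(t)$. Since $M$ is rank-one it is flat, and flatness forces the tangent plane to be constant along each ruling; as I expect to be recorded in section~\ref{Preliminaries}, this bounds the degree of $\sigma$ by one. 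Because the degree is integer-valued and lower semicontinuous, the locus where it is locally constant is open and dense, so it suffices to classify on each component where the degree equals $0$ or $1$. Where the degree is $0$ the space $R(t)$ is locally constant, and choosing $\{X_{i}\}$ constant exhibits $\sigma$ as a cylinder in the sense of Definition~\ref{CylinderDEF}.

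On a component of degree one I would invoke the striction result of section~\ref{Preliminaries}: the singular set of $\sigma$ lies in an $(m-1)$-dimensional striction submanifold $S$, and off $S$ the map $\sigma$ is an embedding. Here flatness gives $\overline{X_{i}'} = b_{i}(t)\, V(t)$ and $\overline{\gamma'} = a(t)\, V(t)$ for a single transverse direction $V(t)$, so that the tangent plane $R(t) \oplus \mathbb{R}V(t)$ is constant along each ruling and the singular locus inside the ruling over $t$ is the hyperplane $\{a(t) + \sum_{i} u^{i} b_{i}(t) = 0\}$. The normal form of $\sigma$ is then governed by whether the singular hyperplanes of infinitesimally nearby rulings share a common point. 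Restricting once more to the open and dense subset where the relevant derivative has locally constant rank, I expect exactly two possibilities to survive: either these hyperplanes have a common vertex $p$, so that $S$ is the cone from $p$ over an $(m-2)$-dimensional map $\psi$ of rank $m-2$ and $\sigma$ is a cone; or they sweep out $S$ with the rulings tangent to it, so that $S$ is a regular $(m-1)$-dimensional submanifold $\xi$ to which $\sigma$ is the tangent submanifold. Together with the cylindrical regions of degree $0$, this reproduces dimension by dimension the trichotomy of Theorem~\ref{DevelopablesClassification}.

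For the converse I would argue directly from the definition of degree. A cylinder has tangent plane $\Span\mleft(\gamma'(t), X_{1}, \dotsc, X_{m-1}\mright)$ depending on $t$ alone; being constant along each ruling, this plane makes the rulings a relative-nullity foliation of rank $m-1$, so the cylinder is flat and $(m-1)$-ruled, hence rank-one (the rank being exactly one as soon as $\gamma$ is not a straight line). For a ruled submanifold of degree one the vectors $\overline{X_{i}'(t)}$ span a line $\mathbb{R}V(t)$, and the existence of a singular point on the ruling over $t$ is equivalent to $\overline{\gamma'(t)} \in \mathbb{R}V(t)$; when this holds along every ruling, $\overline{\sigma_{t}} = \mleft( a(t) + \sum_{i} u^{i} b_{i}(t) \mright) V(t)$ is everywhere proportional to $V(t)$, the tangent plane $R(t) \oplus \mathbb{R}V(t)$ is again constant along rulings, and $\sigma$ is flat and $(m-1)$-ruled, hence rank-one. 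This is precisely where the word \emph{singular} is needed: without a singular point the transverse part $\overline{\gamma'}$ need not align with $V$, the tangent plane rotates along the ruling, and $\sigma$ fails to be flat.

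The main obstacle I anticipate is not the flatness computations, which are routine, but the passage in the degree-one case from the abstract striction submanifold to the three explicit normal forms. One must choose the striction directrix so that the reparametrized $\sigma$ matches the definition of cone or of tangent submanifold on the nose --- in particular verifying that $\psi$ has rank exactly $m-2$ in the conical case and that $\xi = S$ is a genuinely regular $(m-1)$-dimensional submanifold in the tangent case --- and one must control the closed set on which the auxiliary integer invariants jump, so that the union of the cylindrical, conical, and tangent regions is genuinely open and dense. This bookkeeping, rather than any single hard estimate, is the crux of the argument.
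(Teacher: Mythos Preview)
Your overall strategy matches the paper's: reduce to open regions of constant degree, handle degree zero as cylinders, and in degree one use the striction hypersurface $\beta$ (your $S$) to separate the remaining two types. Your converse is essentially Proposition~\ref{RankOneSingularPROP}, and your observation that in the rank-one, degree-one case the entire striction consists of singular points is correct and is exactly what that proposition establishes.

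The one genuine gap is in your conical/tangent dichotomy. The paper's criterion is simply the rank of the striction map $\beta \colon I \times \mathbb{R}^{m-2} \to \mathbb{R}^{m+n}$: if $\beta$ is regular (rank $m-1$), then $\sigma = \beta + u^{m-1} X_{m-1}$ is a tangent submanifold of $\beta$; if instead $\Rank d\beta = m-2$ everywhere, then $\sigma$ is a cone in the paper's sense, with $\psi = \beta$ and $X_{\psi} = X_{m-1}$. There is no single vertex $p$. In the paper's definition of a cone, $\psi$ is an $(m-1)$-parameter map of rank $m-2$, so its image---the ``apex locus''---is $(m-2)$-dimensional, and for $m>2$ this is never a point. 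Your picture (nearby singular hyperplanes sharing a common vertex $p$, with $S$ itself a cone from $p$ over some $\psi$) is the surface case $m=2$ transported incorrectly to higher $m$; it does not recover the paper's cone and would miss almost all examples. Once you replace that picture with ``restrict to the open dense set where $\Rank d\beta$ is locally constant, necessarily $m-1$ or $m-2$ by Remark~\ref{SingularityRMK}'', the argument goes through exactly as in section~\ref{ProofMainTHM}.
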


We emphasize that Theorem~\ref{MainTHM} does not involve any assumption on the codimension of $M$. On the other hand, it is worth noting that when $M$ is rank-one, the first normal space (span of the image of the second fundamental form) is everywhere of dimension one; hence a rank-one submanifold is necessarily the image of an isometric composition~\cite{dajczer1992}, as explained by the following theorem.

\begin{theorem}[\cite{erbacher1971, dajczer1994}] 
Suppose that $M$ is rank-one. If the first normal bundle is parallel in the normal connection, then $M$ is contained in an $(m+1)$-dimensional totally geodesic submanifold of $\mathbb{R}^{m+n}$. On the other hand, if the first normal bundle is not parallel, then $M$ is contained in a rank-one hypersurface of $\mathbb{R}^{m+n}$ as a totally geodesic submanifold.
\end{theorem}

Recall that rank-one hypersurfaces can be locally described in terms of the Gauss parametrization; see \cite{dajczer1985}.

The rest of the paper, which is devoted to proving Theorem~\ref{MainTHM}, is organized as follows. After discussing some background material, in section~\ref{TheStrictionSubmanifold} we generalize the classic theory of noncylindrical ruled surfaces in $\mathbb{R}^{3}$ to arbitrary dimension and codimension; in particular, we verify that, under some reasonable assumption, the singularities of a ruled submanifold always concentrate along a lower dimensional \emph{striction submanifold}. In section~\ref{Rank-oneSubmanifolds} we then specialize our theory to the rank-one case and show that a noncylindrical rank-one submanifold is necessarily singular. Finally, by applying the results obtained earlier, in section~\ref{ProofMainTHM} we prove Theorem~\ref{MainTHM}.

The classification problem for rank-one submanifold was also studied by Ushakov in his Ph.D.\ thesis~\cite{ushakov1993}. The author came up with a classification theorem~\cite[Theorem~2]{ushakov1993} that looks quite similar to ours (although the techniques used in the proof do not). Unfortunately we were not able to fully understand such result and therefore cannot provide a comparison with Theorem~\ref{MainTHM}.

\section{Preliminaries}\label{Preliminaries}
In this section we discuss some preliminaries. Standard references for submanifolds are \cite{dajczer2019, berndt2016} and \cite[Chapter~8]{lee2018}.

\subsection{Distribution along curves}
Let $\gamma \colon I \to \mathbb{R}^{m+n}$ be a smooth regular curve. Without loss of generality, we may assume $\gamma$ to be unit-speed. A \textit{distribution of rank $r$ along $\gamma$} is a rank-$r$ subbundle of the ambient tangent bundle $T\mathbb{R}^{m+n} \rvert_{\gamma}$ over $\gamma$. Recall that
\begin{equation*}
	T\mathbb{R}^{m+n}\lvert_{\gamma} \:= \bigsqcup_{t \,\in \, I} T_{\gamma(t)}\mathbb{R}^{m+n}.
\end{equation*}

Let $\mathcal{D}$ be a distribution of rank $r$ along $\gamma$, and let $\mathcal{D}^{\perp}$ be the distribution of rank $m+n-r$ along $\gamma$ whose fiber at $t$ is the orthogonal complement of $\mathcal{D}_{t}$ in $\mathbb{R}^{m+n}$. For each $t \in I$, define a map $\rho_{t} \colon \mathcal{D}_{t} \to \mathcal{D}^{\perp}_{t}$ by
\begin{equation*}
v \mapsto \pi^{\perp}\frac{\diff v}{\diff t}(t),
\end{equation*}
where $v$ is extended arbitrarily to a smooth local section of $\mathcal{D}$, and where $\pi^{\perp}$ denotes orthogonal projection onto $\mathcal{D}^{\perp}$.

\begin{definition} 
The rank of $\rho_{t}$ is called the \textit{degree} of the distribution $\mathcal{D}$ at $t$. We say that $\mathcal{D}$ is of \textit{degree} $d$ if $d_{t} \coloneqq \Rank \rho_{t} = d$ for all $t \in I$.
\end{definition}

\begin{remark}\label{DegreeInequality}
Note that $d_{t} \leq \min(r, m+n-r)$.	
\end{remark}

Now, let $(E_{1}, \dotsc, E_{r})$ be a frame for $\mathcal{D}$, i.e., an $r$-tuple of smooth vector fields along $\gamma$ forming a basis of $\mathcal{D}_{t}$ for all $t \in I$. In particular, suppose that $(E_{1}, \dotsc, E_{r})$ is parallel with respect to the induced connection on $\mathcal{D}$, so that $\pi^{\top}(\dot{E}_{1}) = \dotsb = \pi^{\top}(\dot{E}_{r}) = 0$; here $\pi^{\top}$ is the orthogonal projection onto $\mathcal{D}$. Then 
\begin{equation*}
\Rank \rho_{t} = \dim \Span( \dot{E}_{1}(t), \dotsc, \dot{E}_{r}(t)).
\end{equation*}

\subsection{Ruled submanifolds}
A \textit{(parametrized) submanifold of $\mathbb{R}^{m+n}$} is a smooth map $V \to \mathbb{R}^{m+n}$, where $V$ is a subset of $\mathbb{R}^{m}$. A point where the differential is injective is called \textit{regular}; otherwise it is \textit{singular}. A submanifold is itself called \textit{regular} if all its points are regular, and \textit{singular} otherwise.

\begin{remark}
	A submanifold, even when regular, may have self-intersections in its image.
\end{remark}

\begin{definition} 
Given a smooth unit-speed curve $\gamma \colon I \to \mathbb{R}^{m+n}$ and a smooth orthonormal frame $(X_{j})_{j=1}^{m-1}$ along $\gamma$, the submanifold
\begin{align*}
	\sigma \colon	I \times \mathbb{R}^{m-1} &\to \mathbb{R}^{m+n}\\
		(t, u^{1}, \dotsc, u^{m-1}) &\mapsto \gamma(t) + u^{1} X_{1}(t) + \dotsb + u^{m-1} X_{m-1}(t)
	\end{align*}
is called a \textit{ruled submanifold} of $\mathbb{R}^{m+n}$. We say that $\gamma$ is a \textit{directrix} of $\sigma$. Moreover, for any fixed $t$, the $(m-1)$-dimensional affine subspace  of $\mathbb{R}^{m+n}$ spanned by $X_{1}(t), \dotsc, X_{m-1}(t)$ is called a \textit{ruling of $\sigma$}.
\end{definition}

The simplest examples of ruled submanifolds are the cylinders (Definition~\ref{CylinderDEF}). A cylinder is a ruled submanifold whose rulings are all parallel (in the usual Euclidean sense). On the other hand, a ruled submanifold is said to be \textit{noncylindrical} if the degree of the ruling distribution $\Span (X_{j})_{j = 1}^{m-1}$ is nonzero for all $t \in I$. Similarly, we speak of ruled submanifolds of \textit{degree} $d$ if the degree of the ruling distribution is constant and equal to $d$.

Recall that the first normal space of a Riemannian submanifold at a point $p$ is the linear subspace of the normal space spanned by the image of the second fundamental form at $p$. For a ruled submanifold, the degree of the ruling distribution and the dimension of the first normal space are closely related, as the next lemma shows.

\begin{lemma}
Let $\sigma$ be a ruled submanifold. If $\sigma$ is of degree $d$, then, at any point $p \in \sigma$, the first normal space $N_{p}^{1}\sigma$ satisfies
\begin{equation*}
d-1 \leq \dim N_{p}^{1}\sigma \leq d+1.
\end{equation*}
\end{lemma}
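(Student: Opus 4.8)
The plan is to compute the first normal space directly from the parametrization and to compare the orthogonal projection $\pi^{\perp}$ onto $\mathcal{D}^{\perp}$, which governs the degree, with the orthogonal projection $\pr_{N}$ onto the normal space $N_{p}\sigma$, which governs the first normal space. Throughout, I would fix a regular point $p = \sigma(t, u^{1}, \dotsc, u^{m-1})$ and write $\mathcal{D} = \Span(X_{1}, \dotsc, X_{m-1})$ for the ruling distribution.

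First I would record the relevant derivatives of $\sigma$. The coordinate fields are $\sigma_{u^{j}} = X_{j}$ and $\sigma_{t} = \dot{\gamma} + \sum_{k} u^{k}\dot{X}_{k}$, so that $T_{p}\sigma = \mathcal{D} \oplus \mathbb{R} w$, where $w = \pi^{\perp}\sigma_{t}$ is nonzero by regularity. Differentiating once more gives $\sigma_{u^{j}u^{k}} = 0$, $\sigma_{t u^{j}} = \dot{X}_{j}$, and $\sigma_{tt} = \ddot{\gamma} + \sum_{k} u^{k}\ddot{X}_{k}$; hence the image of the second fundamental form, i.e.\ the first normal space, is
\begin{equation*}
N^{1}_{p}\sigma = \Span\bigl( \pr_{N}\dot{X}_{1}, \dotsc, \pr_{N}\dot{X}_{m-1}, \pr_{N}\sigma_{tt}\bigr).
\end{equation*}

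Next I would relate the two projections. Since $\mathcal{D} \subset T_{p}\sigma$, we have $N_{p}\sigma \subset \mathcal{D}^{\perp}$, and in fact $\mathcal{D}^{\perp} = N_{p}\sigma \oplus \mathbb{R}\hat{w}$ with $\hat{w} = w/\lvert w\rvert$; consequently $\pr_{N}$ agrees with $\pi^{\perp}$ up to the single direction $\hat{w}$, namely $\pr_{N} z = \pi^{\perp} z - \langle z, \hat{w}\rangle \hat{w}$. On the one hand, using the frame $(X_{j})$ one computes $\rho_{t}(X_{j}) = \pi^{\perp}\dot{X}_{j}$, so the degree equals $d = \dim W$, where $W \coloneqq \Span(\pi^{\perp}\dot{X}_{1}, \dotsc, \pi^{\perp}\dot{X}_{m-1}) = \Ima \rho_{t} \subset \mathcal{D}^{\perp}$. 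On the other hand, because $\pi^{\top}\dot{X}_{j} \in \mathcal{D} \subset T_{p}\sigma$, the vectors $\pr_{N}\dot{X}_{j}$ depend only on $\pi^{\perp}\dot{X}_{j}$, so that $V_{1} \coloneqq \Span(\pr_{N}\dot{X}_{1}, \dotsc, \pr_{N}\dot{X}_{m-1}) = \pr_{N}(W)$.

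The conclusion then follows from two elementary dimension counts. Restricted to $\mathcal{D}^{\perp}$, the map $\pr_{N}$ is the projection off the single line $\mathbb{R}\hat{w}$, hence lowers dimension by at most one; thus $\dim V_{1} = \dim \pr_{N}(W) \geq d - 1$, while trivially $\dim V_{1} \leq \dim W = d$. Finally $N^{1}_{p}\sigma = V_{1} + \mathbb{R}\,\pr_{N}\sigma_{tt}$ is obtained from $V_{1}$ by adjoining one vector, so that $\dim V_{1} \leq \dim N^{1}_{p}\sigma \leq \dim V_{1} + 1$. Combining these inequalities yields $d - 1 \leq \dim N^{1}_{p}\sigma \leq d + 1$. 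I expect the only delicate point to be the bookkeeping between $\pi^{\perp}$ and $\pr_{N}$—specifically the observation that they differ precisely by the rank-one correction along $\hat{w} = \pi^{\perp}\sigma_{t}$, which is exactly what produces the $\pm 1$ slack in the stated bounds.
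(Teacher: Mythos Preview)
Your argument is correct and follows essentially the same route as the paper: both identify $N^{1}_{p}\sigma$ as spanned by the normal projections $\pr_{N}\dot{X}_{j}$ together with one additional vector coming from the $t$--$t$ derivative, observe that $\pr_{N}\dot{X}_{j}$ factors through $\rho_{t}(X_{j})=\pi^{\perp}\dot{X}_{j}$, and use the one-dimensional discrepancy $\mathcal{D}^{\perp}=N_{p}\sigma\oplus\mathbb{R}\hat{w}$ to get $d-1\le\dim V_{1}\le d$ and hence $d-1\le\dim N^{1}_{p}\sigma\le d+1$. If anything, your version is slightly more explicit about working at a general point (using $\sigma_{t}$ rather than the paper's shorthand $X_{0}=\dot{\gamma}$) and about the rank-one relation between $\pi^{\perp}$ and $\pr_{N}$.
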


\begin{proof}
Let $X_{0} = \dot{\gamma}$, and let $\mathit{II}$ be the second fundamental form of $\sigma$. Since $\mathit{II}$ vanishes on each ruling, the first normal space is spanned by
\begin{equation*}
\mathit{II}(x_{0}, x_{0}), \dotsc, \mathit{II}(x_{0}, x_{m-1});
\end{equation*}
here $x_{0} = X_{0} \rvert_{p}$,  $x_{1} = X_{1} \rvert_{p}$, etc.
Recall that
\begin{equation*}
\mathit{II}(X_{0}, X_{j}) = \pi^{\perp}_{\sigma}\dot{X}_{j} = \pi^{\perp}_{\sigma}\rho(X_{j}),
\end{equation*}
where $\pi^{\perp}_{\sigma}$ denotes orthogonal projection onto $N\sigma$. 

Assume that $\sigma$ is of degree $d$. Since
\begin{equation*}
\Span \mleft(\rho_{t}(x_{1}), \dotsc, \rho_{t}(x_{m-1})\mright) \subset N_{p}\sigma \oplus \mleft(T_{p}M \cap \mleft(\Span(x_{j})_{j = 1}^{m-1}\mright)^{\perp}\mright),
\end{equation*}
it follows that 
\begin{equation*}
d-1 \leq \dim \Span \mleft(\mathit{II}(x_{0}, x_{1}), \dotsc, \mathit{II}(x_{0}, x_{m-1})\mright) \leq d,
\end{equation*}
from which one concludes that 
\begin{equation*}
d-1 \leq \dim \Span \mleft(\mathit{II}(x_{0}, x_{0}), \dotsc, \mathit{II}(x_{0}, x_{m-1})\mright) \leq d+1.
\end{equation*}
\end{proof}

\section{The striction submanifold}\label{TheStrictionSubmanifold}
The purpose of this section is to extend the notion of line of striction of a ruled surface in $\mathbb{R}^{3}$ to arbitrary dimension and codimension; see, e.g., \cite[section~3-5]{docarmo2016} for the classical theory. 

To begin with, let us assume that $\sigma$ is a ruled submanifold. The classical line of striction is defined under the assumption that the surface $(t,u) \mapsto \gamma(t) + u X(t)$ is noncylindrical, i.e., that $\dot{X}$ never vanishes. Here we shall require that, for all $t \in I$,
\begin{equation*}
	\begin{cases}
\dim \Span( \rho_{t}X_{1}(t), \dotsc, \rho_{t}X_{m-1}(t)) = d > 0,\\
\dim \Span(\rho_{t}X_{m -d}(t), \dotsc, \rho_{t}X_{m-1}(t)) = d;
\end{cases}
\end{equation*}
in other words, that both distributions $\mathcal{D} =\Span(X_{j})_{j=1}^{m-1}$ and $\Span(X_{h})_{h=m-d}^{m-1}$ are of degree $d >0$. Note that, by Remark~\ref{DegreeInequality}, we have the inequality
\begin{equation*}
0 < d \leq \min(m-1, n+1).
\end{equation*}

In this setting, we are going  to search for an $(m-d)$-dimensional submanifold $\beta = \beta(t, u^{1}, \dotsc, u^{m-d-1})$, lying in the image of $\sigma$ as a graph over $I \times \mathbb{R}^{m-d-1}$, such that $\langle \partial \beta / \partial t, \rho X_{h} \rangle= 0$ for all $h = m-d, \dotsc, m-1$; such submanifold is called the \textit{striction submanifold of $\sigma$}. Note that $\beta$ is contained in the image of $\sigma$ as a graph over $I \times \mathbb{R}^{m-d-1}$ if and only if there are smooth functions $u^{m-d}, \dotsc, u^{m-1}$ such that 
\begin{align*}
\beta(t, u^{1}, \dotsc, u^{m-d-1}) &= \gamma(t) + u^{1} X_{1}(t) + \dotsb + u^{m-d-1} X_{m-d-1}(t)\\
& \quad + u^{m-d}(t, u^{1}, \dotsc, u^{m-d-1}) X_{m-d}(t) + \dotsb\\
& \quad + u^{m-1}(t, u^{1}, \dotsc, u^{m-d-1}) X_{m-1}(t).
\end{align*}

We thus compute
\begin{align*}
\frac{\partial \beta}{\partial t} =	\dot{\beta} &= \dot{\gamma} + u^{1} \dot{X}_{1} + \dotsb + u^{m-d-1} \dot{X}_{m-d-1}\\
	& \quad + \dot{u}^{m-d} X_{m-d} + u^{m-d} \dot{X}_{m-d} + \dotsb\\
	& \quad + \dot{u}^{m-1} X_{m-1} + u^{m-1} \dot{X}_{m-1},
\end{align*}
which implies that
\begin{equation*}
\langle	\dot{\beta}, \rho X_{h} \rangle = \langle \dot{\gamma}, \rho X_{h} \rangle + u^{1} \langle \dot{X}_{1}, \rho X_{h} \rangle + \dotsb + u^{m-1} \langle \dot{X}_{m-1}, \rho X_{h} \rangle.
\end{equation*}

Letting 
\begin{equation*}
A =
\begin{pmatrix}
\langle \dot{X}_{m-d},  \rho X_{m-d} \rangle & \dots & \langle \dot{X}_{m-1},  \rho X_{m-d} \rangle \\
\vdots & \ddots & \vdots\\
\langle \dot{X}_{m-d},  \rho X_{m-1} \rangle & \dots & \langle \dot{X}_{m-1},  \rho X_{m-1} \rangle
\end{pmatrix} \in \mathbb{R}^{d \times d}
\end{equation*}
and
\begin{equation*}
b =  -
\begin{pmatrix}
\langle \dot{\gamma}, \rho X_{m-d} \rangle + u^{1} \langle \dot{X}_{1}, \rho X_{m-d} \rangle + \dotsb + u^{m-d-1} \langle \dot{X}_{m-d-1}, \rho X_{m-d} \rangle\\
\vdots\\
\langle \dot{\gamma}, \rho X_{m-1} \rangle + u^{1} \langle \dot{X}_{1}, \rho X_{m-1} \rangle + \dotsb + u^{m-d-1} \langle \dot{X}_{m-d-1}, \rho X_{m-1} \rangle
\end{pmatrix} \in \mathbb{R}^{d \times 1},
\end{equation*}
where $A = A(t)$ and $b = b(t, u^{1}, \dotsc, u^{m-d-1})$, our problem is equivalent to the linear system of equations
\begin{equation}\label{LinSystem}
A
\begin{pmatrix}
u^{m-d}\\
\vdots\\
u^{m-1}
\end{pmatrix}
= b
\end{equation}
in the $d$ unknowns $u^{m-d}, \dotsc, u^{m-1}$. By assumption, the matrix $A$ has full rank, and so $A^{-1}b$ is the unique solution.

It remains to show that the corresponding solution $\beta$ is well-defined, meaning that its image does not depend on the choice of the curve $\gamma$. To this end, suppose that, for another choice of $\gamma$, the striction submanifold is different from the one defined by $A^{-1}b$. Then there are two striction submanifolds, say $\beta_{1}$ and $\beta_{2}$. It follows that there exist two $d$-tuples $(u_{1}^{m-d}, \dotsc, u_{1}^{m-1})$, $(u_{2}^{m-d}, \dotsc, u_{2}^{m-1})$ of functions such that
\begin{align*}
	\beta_{\ell}(t, u^{1}, \dotsc, u^{m-d-1}) &= \gamma(t) + u^{1} X_{1}(t) + \dotsb + u^{m-d-1} X_{m-d-1}(t)\\
	& \quad + u_{\ell}^{m-d}(t, u^{1}, \dotsc, u_{\ell}^{m-d-1}) X_{m-d}(t) + \dotsb\\
	& \quad + u_{\ell}^{m-1}(t, u^{1}, \dotsc, u_{\ell}^{m-d-1}) X_{m-1}(t),
\end{align*}
with $\ell =1,2$. This implies that the solution of \eqref{LinSystem} is not unique, which is a contradiction.


Having verified that the striction submanifold is well-defined, we explain its significance by proving the following proposition.

\begin{proposition}\label{StrictionSubmanifoldPROP}
The only singular points of $\sigma$, if any, are along its striction submanifold. In particular, $\sigma$ is singular if and only if 
\begin{equation*}
\dot{\beta}(t, u^{1}, \dotsc, u^{m-d-1}) \wedge X_{1}(t) \wedge \dotsb \wedge X_{m-1}(t) = 0
\end{equation*}
for some $(t, u^{1}, \dotsc, u^{m-d-1}) \in I \times \mathbb{R}^{m-d-1}$.
\end{proposition}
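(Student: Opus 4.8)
The plan is to recast singularity of $\sigma$ infinitesimally and then match it against the equations defining $\beta$. The differential of $\sigma$ at $(t, u^{1}, \dotsc, u^{m-1})$ has columns $\dot\sigma \coloneqq \partial\sigma/\partial t = \dot\gamma + \sum_{j=1}^{m-1} u^{j}\dot X_{j}$ and $\partial\sigma/\partial u^{j} = X_{j}$ for $j = 1, \dotsc, m-1$. Since $X_{1}, \dotsc, X_{m-1}$ are orthonormal, hence linearly independent, the point is singular precisely when $\dot\sigma \in \mathcal{D}_{t} = \Span(X_{1}, \dotsc, X_{m-1})$, equivalently when $\pi^{\perp}\dot\sigma = 0$. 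This reformulation is what I would carry through the whole argument.

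First I would show that every singular point lies on $\beta$. Pairing $\dot\sigma$ with $\rho X_{h} \in \mathcal{D}^{\perp}$, the condition $\pi^{\perp}\dot\sigma = 0$ forces in particular $\langle \dot\sigma, \rho X_{h} \rangle = 0$ for each $h = m-d, \dotsc, m-1$. Expanding, $\langle \dot\gamma, \rho X_{h} \rangle + \sum_{j=1}^{m-1} u^{j}\langle \dot X_{j}, \rho X_{h} \rangle = 0$, which is exactly the linear system \eqref{LinSystem} in the unknowns $u^{m-d}, \dotsc, u^{m-1}$. Because $A$ has full rank, these unknowns must coincide with the unique striction values $A^{-1}b$; that is, the singular point sits on the striction submanifold. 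This establishes the first assertion.

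For the stated criterion I would relate $\dot\beta$ to $\dot\sigma$ along $\beta$. Applying the chain rule to $\beta(t, u^{1}, \dotsc, u^{m-d-1}) = \sigma(t, u^{1}, \dotsc, u^{m-d-1}, u^{m-d}, \dotsc, u^{m-1})$, with the last $d$ arguments the striction functions, gives $\dot\beta = \dot\sigma + \sum_{h=m-d}^{m-1} \dot u^{h} X_{h}$, in agreement with the explicit computation of $\dot\beta$ above. The correction term lies in $\mathcal{D}_{t}$, so $\pi^{\perp}\dot\beta = \pi^{\perp}\dot\sigma$ at every point of $\beta$. Hence $\dot\beta \wedge X_{1} \wedge \dotsb \wedge X_{m-1} = 0$ — equivalently $\dot\beta \in \mathcal{D}_{t}$, i.e. $\pi^{\perp}\dot\beta = 0$ — holds at $(t, u^{1}, \dotsc, u^{m-d-1})$ if and only if $\pi^{\perp}\dot\sigma = 0$ at the corresponding point, i.e. if and only if $\sigma$ is singular there. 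Combined with the previous paragraph, where every singular point was confined to $\beta$, this yields the global equivalence: $\sigma$ is singular if and only if the displayed wedge vanishes for some $(t, u^{1}, \dotsc, u^{m-d-1})$.

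The one point demanding care is the asymmetry between the two conditions: singularity, $\pi^{\perp}\dot\sigma = 0$, is a priori stronger than the $d$ striction equations $\langle \dot\sigma, \rho X_{h} \rangle = 0$ that cut out $\beta$. The implication runs only one way, from singular to the striction equations, but that direction is precisely what is needed to trap the singular points on $\beta$; the converse fails in general, reflecting that a generic point of $\beta$ is regular. The projection identity $\pi^{\perp}\dot\beta = \pi^{\perp}\dot\sigma$ is then what upgrades the scalar singularity test into the coordinate-free wedge condition, so I would take care to state the chain-rule computation of $\dot\beta$ and the vanishing of its tangential correction cleanly.
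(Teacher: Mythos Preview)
Your argument is correct and follows essentially the same route as the paper: both characterise singularity by $\dot\sigma \in \mathcal{D}_{t}$, pair with $\rho X_{h}$ to force the last $d$ coordinates onto $\beta$, and then identify $\pi^{\perp}\dot\beta$ with $\pi^{\perp}\dot\sigma$ along the striction locus. The only cosmetic difference is that the paper first reparametrises $\sigma$ with $\beta$ as directrix and carries out the same steps via explicit wedge products (so the resulting linear system is homogeneous rather than inhomogeneous), whereas you work in the original parametrisation with projections and the chain rule.
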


\begin{proof}
First of all, we express $\sigma$ in terms of the striction submanifold:
\begin{equation*}
\sigma(t, u^{1}, \dotsc, u^{m-1}) = \beta(t, u^{1}, \dotsc, u^{m-d-1}) + u^{m-d}X_{m-d}(t) + \dotsb + u^{m-1}X_{m-1}(t).
\end{equation*}
To find the singularities of $\sigma$, we compute its partial derivatives, take their wedge product $Z$, and then set it equal to zero.

The partial derivatives of $\sigma$ with respect to $t$ and $u^{j}$ are
\begin{equation*}
\frac{\partial \sigma}{\partial t} = \dot{\beta} + u^{m-d}\dot{X}_{m-d} + \dotsb + u^{m-1}\dot{X}_{m-1}
\end{equation*}
and
\begin{align*}
\frac{\partial \sigma}{\partial u^{j}} =
\begin{cases}
X_{j} + \dfrac{\partial u^{m-d}}{\partial u^{j}} X_{m-d} + \dotsb + \dfrac{\partial u^{m-1}}{\partial u^{j}} X_{m-1} \quad &\text{if } j \in \{1, \dotsc, m-d-1\},\\
X_{j} \quad &\text{if } j \in \{m-d, \dotsc, m-1\},
\end{cases}
\end{align*}
respectively. Hence, computing their wedge product gives
\begin{align*}
Z &= \mleft(\dot{\beta} + u^{m-d}\dot{X}_{m-d} + \dotsb + u^{m-1}\dot{X}_{m-1} \mright)\\
& \quad \wedge \mleft(X_{1} + \dfrac{\partial u^{m-d}}{\partial u^{1}} X_{m-d} + \dotsb + \dfrac{\partial u^{m-1}}{\partial u^{1}} X_{m-1}\mright) \wedge \dotsb\\
& \quad \wedge \mleft( X_{m-d-1} + \dfrac{\partial u^{m-d}}{\partial u^{m-d-1}} X_{m-d} + \dotsb + \dfrac{\partial u^{m-1}}{\partial u^{m-d-1}} X_{m-1}\mright) \wedge X_{m-d} \wedge \dotsb \wedge X_{m-1}\\
&= \mleft(\dot{\beta} + u^{m-d}\dot{X}_{m-d} + \dotsb + u^{m-1}\dot{X}_{m-1}\mright) \wedge X_{1} \wedge \dotsb \wedge X_{m-1}.
\end{align*}

We claim that $Z = 0$ exactly when
\begin{equation*}
\begin{cases}
u^{m-d} = \dotsb = u^{m-1} = 0,\\
\dot{\beta} \wedge X_{1} \wedge \dotsb \wedge X_{m-1} = 0.
\end{cases}
\end{equation*}
To verify the claim, note that $Z(t, u^{1}, \dotsc, u^{m-d-1}) = 0$ if and only if
\begin{align*}
&\dot{\beta}(t, u^{1}, \dotsc, u^{m-d-1}) + u^{m-d} \dot{X}_{m-d}(t) + \dotsb  + u^{m-1} \dot{X}_{m-1}(t)\\
&\qquad = a_{1} X_{1}(t) + \dotsb + a_{m-1} X_{m-1}(t)
\end{align*}
for some scalars $a_{1}, \dotsc, a_{m-1}$, i.e.,
\begin{align}\label{SingularityCondition}
\begin{split}
\dot{\beta}(t, u^{1}, \dotsc, u^{m-d-1}) &= a_{1} X_{1}(t) + \dotsb + a_{m-1} X_{m-1}(t)\\
& \quad - u^{m-d} \dot{X}_{m-d}(t) - \dotsb  - u^{m-1} \dot{X}_{m-1}(t).
\end{split}
\end{align}
On the other hand, by definition of $\beta$, we have $\langle \dot{\beta}, \rho X_{m-d}\rangle = \dotsb = \langle \dot{\beta}, \rho X_{m-1}\rangle =0$. Hence \eqref{SingularityCondition} implies
\begin{equation*}
\begin{cases}
	u^{m-d} \langle \dot{X}_{m-d}(t), \rho_{t} X_{m-d}(t)\rangle + \dotsb + u^{m-1} \langle \dot{X}_{m-1}(t), \rho_{t} X_{m-d}(t)\rangle =0,\\
	\vdots\\
	u^{m-d} \langle \dot{X}_{m-d}(t), \rho_{t} X_{m-1}(t)\rangle + \dotsb + u^{m-1} \langle \dot{X}_{m-1}(t), \rho_{t} X_{m-1}(t)\rangle =0.
\end{cases}
\end{equation*}
This defines a homogenous linear system of full rank, which has unique (trivial) solution; thus $u^{m-d} = \dotsb = u^{m-1} = 0$.
\end{proof}

\begin{remark} \label{EquivalentCondition}
Suppose that $\rho_{t} X_{j}(t)  \neq 0$. Since the vectors $\dot{\beta}(t, u^{1},\dotsc, u^{m-d-1})$, $X_{1}(t), \dotsc, X_{m-1}(t)$ are in the orthogonal complement of $\rho_{t} X_{j}(t)$, the condition
\begin{equation*}
\dot{\beta}(t, u^{1},\dotsc, u^{m-d-1}) \wedge X_{1}(t) \wedge \dotsb \wedge X_{m-1}(t) = 0
\end{equation*}
is equivalent to
\begin{equation*}
\dot{X}_{j}(t) \wedge \dot{\beta}(t, u^{1},\dotsc, u^{m-d-1}) \wedge X_{1}(t) \wedge \dotsb \wedge X_{m-1}(t) = 0.
\end{equation*}
\end{remark}

\begin{remark}\label{SingularityRMK}
The proof of Proposition~\ref{StrictionSubmanifoldPROP} shows that the dimension of the image of $\diff \sigma$ at a singular point is $m-1$. Since, whenever $u^{m-d} = \dotsb = u^{m-1} = 0$,
\begin{equation*}
Z = \dot{\beta} \wedge \dfrac{\partial \beta}{\partial u^{1}} \wedge \dotsb \wedge \dfrac{\partial \beta}{\partial u^{m-d-1}} \wedge X_{m-d} \wedge \dotsb \wedge X_{m-1},
\end{equation*}
it follows that the image of $\diff \beta_{(t,u)}$ has dimension at least equal to $m-d-1$ for all $(t,u) \in I \times \mathbb{R}^{m-d-1}$.
\end{remark}

\section{Rank-one submanifolds}\label{Rank-oneSubmanifolds}
Here we study an important subclass of ruled submanifolds, called \emph{rank-one}.

\begin{definition}\label{RankOneDEF}
Let $\alpha$ be an $m$-dimensional submanifold of $\mathbb{R}^{m+n}$. The \textit{relative nullity index} of $\alpha$ at a regular point $q$ is the dimension of the kernel of the second fundamental form of $\alpha$ at $q$~\cite{chern1952}. We say that $\alpha$ is a \textit{rank-one} submanifold if it is ruled, and the relative nullity index at $q$ is equal to $m-1$ for all regular points $q \in \alpha$.
\end{definition}

\begin{remark}
The kernel of the second fundamental form of $\alpha$, also known as the \emph{relative nullity distribution}, coincides with the kernel of the differential of the Gauss map $\alpha \to G(m, \mathbb{R}^{m+n})$~\cite[Exercise~1.24]{dajczer2019}.
\end{remark}

\begin{remark}
Requiring that $\alpha$ be ruled in Definition~\ref{RankOneDEF} is only done for ease of presentation, as we previously defined ruled submanifolds as foliated by \emph{complete} leaves. 
\end{remark}

The next result characterizes rank-one submanifolds in two different ways.

\begin{theorem}[\cite{ushakov1999}, {\cite[Lemma~3.1]{hartman1965}}]\label{RankOneCharacterization}
If $\sigma$ is a ruled submanifold without planar points, then the following statements are equivalent:
\begin{enumerate}[font=\upshape]
\item $\sigma$ is rank-one.	
\item The induced metric on $\sigma$ is flat.
\item All tangent spaces along (the regular points of) any fixed ruling can be canonically identified with the same linear subspace of $\mathbb{R}^{m+n}$.
\end{enumerate}
\end{theorem}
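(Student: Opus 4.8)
The plan is to show that all three statements are equivalent to the single condition that the ruling distribution $\mathcal{D} = \Span(X_{j})_{j=1}^{m-1}$ is contained in the relative nullity distribution $\Delta = \ker \mathit{II}$ at every regular point. The basic observation is that, since $\sigma$ is affine in $u^{1}, \dotsc, u^{m-1}$, one has $\mathit{II}(X_{i}, X_{j}) = \pi^{\perp}_{\sigma}(\partial^{2}\sigma / \partial u^{i}\partial u^{j}) = 0$ for all $i, j$; thus $\mathit{II}$ vanishes identically on $\mathcal{D} \times \mathcal{D}$. At a fixed regular point $p$ I would complete $(X_{1}, \dotsc, X_{m-1})$ to an orthonormal basis of $T_{p}\sigma$ by a unit vector $e \perp \mathcal{D}$ and record the normal vectors $\zeta_{0} = \mathit{II}(e,e)$ and $\zeta_{i} = \mathit{II}(e, X_{i})$. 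Because $\mathit{II}$ is entirely determined by these data, evaluating $\mathit{II}(v, \cdot)$ on $v = a e + \sum b^{i} X_{i}$ shows that $v \in \Delta$ forces $a\zeta_{i} = 0$ for every $i$; hence, if some $\zeta_{i} \neq 0$, then $a = 0$ and $\dim \Delta = (m-1) - \dim \Span(\zeta_{1}, \dotsc, \zeta_{m-1}) \leq m-2$, whereas if all $\zeta_{i} = 0$ then $\mathcal{D} \subseteq \Delta$.

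For the equivalence of (1) with $\mathcal{D} \subseteq \Delta$ I would argue as follows. If $\sigma$ is rank-one, then $\dim \Delta = m-1$, which by the dimension count above is incompatible with any $\zeta_{i} \neq 0$; hence all $\zeta_{i}$ vanish and $\mathcal{D} \subseteq \Delta$. Conversely, if $\mathcal{D} \subseteq \Delta$, then $\dim \Delta \geq m-1$, and the hypothesis of no planar points guarantees $\mathit{II} \neq 0$, so $\zeta_{0} \neq 0$ and $e \notin \Delta$, forcing $\dim \Delta = m-1$; thus $\sigma$ is rank-one. The role of the absence of planar points is precisely to exclude the degenerate case $\Delta = T_{p}\sigma$.

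For the equivalence of (2) with $\mathcal{D} \subseteq \Delta$ I would invoke the Gauss equation $R(A,B,C,D) = \langle \mathit{II}(A,D), \mathit{II}(B,C)\rangle - \langle \mathit{II}(A,C), \mathit{II}(B,D)\rangle$. Since $\mathit{II}(P,Q) = 0$ unless one of $P, Q$ equals $e$, and since $R$ is antisymmetric in its first and in its last two slots, every component of $R$ either vanishes or reduces, up to sign, to $R(X_{i}, e, e, X_{j}) = -\langle \zeta_{i}, \zeta_{j}\rangle$. Therefore the induced metric is flat if and only if the Gram matrix $(\langle \zeta_{i}, \zeta_{j}\rangle)_{i,j}$ vanishes, which (by looking at the diagonal) happens if and only if every $\zeta_{i} = 0$, i.e.\ $\mathcal{D} \subseteq \Delta$.

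Finally, for the equivalence of (3) with $\mathcal{D} \subseteq \Delta$ I would use the identification of $\Delta$ with $\ker \diff G$, where $G \colon \sigma \to G(m, \mathbb{R}^{m+n})$ is the Gauss map, as recorded in the remark following Definition~\ref{RankOneDEF}. Moving along a fixed ruling means differentiating in the directions $X_{j} \in \mathcal{D}$, and the tangent space $T_{p}\sigma$, viewed as a point of the Grassmannian, stays constant along the ruling exactly when $\diff G$ annihilates $\mathcal{D}$, that is, when $\mathcal{D} \subseteq \ker \diff G = \Delta$; this is the precise meaning of the ``canonical identification'' in statement (3). I expect the main subtlety to lie not in the computations but in the two interpretive steps: pinning down $\dim \Delta = m-1$ (rather than $m$) through the no-planar-points hypothesis, and reading statement (3) as constancy of the Gauss map along each ruling.
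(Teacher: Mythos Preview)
The paper does not actually prove Theorem~\ref{RankOneCharacterization}; it is quoted from the literature (Ushakov and Hartman--Nirenberg) and used as a black box. So there is no ``paper's own proof'' to compare against.

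That said, your sketch is correct and self-contained. The reduction of all three statements to the single condition $\mathcal{D}\subseteq\Delta$ is exactly the right organizing principle, and each of your three equivalences checks out: the dichotomy ``some $\zeta_i\neq 0$ forces $\dim\Delta\le m-2$, otherwise $\mathcal{D}\subseteq\Delta$'' is a clean way to handle (1); the Gauss-equation computation for (2) is accurate because antisymmetry kills every curvature component except $R(X_i,e,e,X_j)=-\langle\zeta_i,\zeta_j\rangle$; and reading (3) as constancy of the Gauss map along rulings, hence $\mathcal{D}\subseteq\ker\diff G=\Delta$, is the intended interpretation. Your remark that the no-planar-points hypothesis is what prevents $\Delta=T_p\sigma$ (and thus pins $\dim\Delta$ at $m-1$ rather than $m$) is also the correct reading. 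Note, incidentally, that the paper's discussion immediately following the theorem---that rank-one is equivalent to each $\dot X_j$ being tangent to $\sigma$---is the same condition in different clothing: $X_j\in\Delta$ means $\mathit{II}(X_j,\cdot)=0$, and since $\mathit{II}(X_j,X_i)=0$ automatically, this reduces to $\mathit{II}(X_j,\partial_t)=\pi^\perp_\sigma\dot X_j=0$.
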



According to Theorem~\ref{RankOneCharacterization}, a ruled submanifold $\sigma$ that is free of planar points is rank-one exactly when the span of its partial derivatives is independent of $u^{1}, \dotsc, u^{m-1}$. As explained in \cite[section~3]{raffaelli202x}, this condition is equivalent to $\dot{X}_{j}$ being tangent to $\sigma$ for all $j = 1, \dotsc, m-1$.

\begin{lemma}[{\cite[Corollary~3.8]{raffaelli202x}}]\label{DevelopabilityCondition}
If $\sigma \colon (t,u) \mapsto \gamma(t) + u^{j}X_{j}(t)$ is rank-one, then the following system of equations holds:
\begin{equation}\label{DevelopabilitySystem}
	\begin{cases}
\dot{X}_{1} \wedge \dot{\gamma} \wedge X_{1} \wedge \dotsb \wedge X_{m-1} = 0, \\
\vdots\\
 \dot{X}_{m-1} \wedge \dot{\gamma} \wedge X_{1} \wedge \dotsb \wedge X_{m-1} = 0.
	\end{cases}
\end{equation}
Conversely, suppose that $\sigma$ has no planar point and is regular along $\gamma$. If \eqref{DevelopabilitySystem} holds, then $\sigma$ is rank-one.
\end{lemma}

This lemma implies that a noncylindrical rank-one submanifold is necessarily of degree one, because \eqref{DevelopabilitySystem} forces $\rho_{t} X_{j}(t)$ to lie in the orthogonal complement of $\mathcal{D}_{t} = \Span(X_{j}(t))_{j=1}^{m-1}$ in the tangent space of $\sigma$, which is a subspace of dimension one. 

A further consequence of Lemma~\ref{DevelopabilityCondition} is that noncylindrical rank-one submanifolds necessarily have singularities.

\begin{proposition}\label{RankOneSingularPROP}
If $\sigma$ is rank-one and noncylindrical, then all points of the striction hypersurface are singular. Conversely, if $\sigma$ is a ruled submanifold of degree one that is singular at $\beta(t, u^{1}, \dotsc, u^{m-2})$ for all $t \in I$ and some $(u^{1}, \dotsc, u^{m-2}) \in \mathbb{R}^{m-2}$, then it is rank-one.
\end{proposition}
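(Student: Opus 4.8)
The plan is to reduce both implications to a single computation which, for a ruled submanifold of degree one, identifies the locus where the striction hypersurface is singular with the solution set of the developability system~\eqref{DevelopabilitySystem}. Write $\mathcal{D} = \Span(X_j)_{j=1}^{m-1}$. Since $\sigma$ is of degree one, the vectors $\rho_t X_1(t), \dotsc, \rho_t X_{m-1}(t)$ span a line in $\mathcal{D}^\perp_t$; I fix a unit section $\nu$ of this line and write $\rho_t X_k = c_k \nu$ for smooth functions $c_k = c_k(t)$, where $c_{m-1} \neq 0$ by the standing assumption of section~\ref{TheStrictionSubmanifold}. Equivalently, $\dot{X}_k = \pitop \dot{X}_k + c_k \nu$ with $\pitop \dot{X}_k \in \mathcal{D}$.

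First I would carry out the computation common to both directions. For $d = 1$ the striction hypersurface is $\beta = \gamma + \sum_{k=1}^{m-2} u^k X_k + u^{m-1} X_{m-1}$, with $u^{m-1} = u^{m-1}(t, u^1, \dotsc, u^{m-2})$ determined by the defining relation $\langle \dot{\beta}, \nu\rangle = 0$. Differentiating in $t$ and projecting onto $\mathcal{D}^\perp$ annihilates the $\dot{u}^{m-1} X_{m-1}$ term and yields
\[
\piperp \dot{\beta} = \piperp \dot{\gamma} + \Bigl( \textstyle\sum_{k=1}^{m-1} u^k c_k \Bigr) \nu.
\]
Decomposing $\piperp \dot{\gamma} = \langle \dot{\gamma}, \nu\rangle \nu + w$ with $w \perp \nu$, the defining relation $\langle \dot{\beta}, \nu\rangle = \langle \dot{\gamma}, \nu\rangle + \sum u^k c_k = 0$ reduces this to $\piperp \dot{\beta} = w$, a quantity depending only on $t$. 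By Proposition~\ref{StrictionSubmanifoldPROP}, $\sigma$ is singular at $\beta(t, u^1, \dotsc, u^{m-2})$ exactly when $\dot{\beta} \in \mathcal{D}$, i.e.\ when $w(t) = 0$; note this is independent of $u^1, \dotsc, u^{m-2}$, which is why testing a single tuple suffices in the hypothesis. Expanding each wedge in~\eqref{DevelopabilitySystem} modulo $\mathcal{D}$, where $\dot{X}_i \equiv c_i \nu$, the equations with $c_i = 0$ hold automatically, while those with $c_i \neq 0$ become $c_i\, \nu \wedge w \wedge X_1 \wedge \dotsb \wedge X_{m-1} = 0$; hence~\eqref{DevelopabilitySystem} is likewise equivalent to $w = 0$. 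This establishes the equivalence at the heart of the proposition: for a ruled submanifold of degree one, $\sigma$ is singular at the corresponding striction point if and only if~\eqref{DevelopabilitySystem} holds at $t$.

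Granting this equivalence, both directions follow at once. For the forward direction, a noncylindrical rank-one submanifold is of degree one (as recorded after Lemma~\ref{DevelopabilityCondition}) and satisfies~\eqref{DevelopabilitySystem}, so $w \equiv 0$ and every point of the striction hypersurface is singular. For the converse, singularity of $\sigma$ at $\beta(t, u^1, \dotsc, u^{m-2})$ for all $t$ forces $w \equiv 0$, hence~\eqref{DevelopabilitySystem}, and Lemma~\ref{DevelopabilityCondition} then concludes that $\sigma$ is rank-one.

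The step I expect to be the real obstacle is the ``no planar point'' hypothesis required to invoke Lemma~\ref{DevelopabilityCondition} in the converse. By the first normal space computation of section~\ref{Preliminaries}, once~\eqref{DevelopabilitySystem} holds one has $\mathit{II}(x_0, x_k) = 0$ for $k = 1, \dotsc, m-1$, so $N^1_p \sigma = \Span(\mathit{II}(x_0, x_0))$ and a regular point is planar precisely when $\mathit{II}(x_0, x_0) = 0$. The singularity data controls only the transverse component $w$ of $\dot{\gamma}$ and is silent about this second-order term: degree one together with~\eqref{DevelopabilitySystem} does not by itself exclude an isolated totally geodesic ruling---for $m = 2$, $n = 1$ this is precisely a zero-torsion ruling of a tangent developable, where $\mathit{II}(x_0, x_0)$ vanishes although $\rho_t X_1 \neq 0$. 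The cleanest remedy is to verify nondegeneracy of $N^1_p\sigma$ on the pertinent regular locus directly, passing if necessary to the open dense subset where $\mathit{II}(x_0, x_0) \neq 0$, in keeping with the ``open and dense'' formulation of Theorem~\ref{MainTHM}. Securing this nondegeneracy, rather than the routine wedge bookkeeping, is where the genuine content of the converse resides.
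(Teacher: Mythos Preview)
Your core computation is correct and lands on the same equivalence as the paper---singularity of $\sigma$ along the striction hypersurface at $t$ is equivalent to the developability system~\eqref{DevelopabilitySystem} at $t$---but the packaging differs. The paper works directly with wedge products: it writes out $\dot\beta$ explicitly as in~\eqref{StrictionHypersurface}, uses~\eqref{DevelopabilitySystem} to obtain $\dot X_j \wedge \dot\beta \wedge X_1 \wedge \dotsb \wedge X_{m-1} = 0$, and then invokes Remark~\ref{EquivalentCondition} to strip the $\dot X_j$; for the converse it simply substitutes~\eqref{StrictionHypersurface} back into the singularity wedge. Your choice of a unit section $\nu$ of $\Ima\rho_t$ and the residual vector $w = \piperp\dot\gamma - \langle\dot\gamma,\nu\rangle\,\nu$ makes the same content more explicit: the defining relation of $\beta$ kills the $\nu$-component of $\piperp\dot\beta$, leaving exactly $w$, and both the singularity condition and~\eqref{DevelopabilitySystem} collapse to $w=0$. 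Either route yields the equivalence; yours buys the clean observation that $\piperp\dot\beta = w(t)$ depends on $t$ alone, explaining immediately why testing a single $(u^1,\dotsc,u^{m-2})$ suffices.

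Your worry about planar points is well placed, and it is not a defect of your argument relative to the paper's: the paper's proof of the converse also stops at~\eqref{DevelopabilitySystem} and never verifies the ``no planar point'' clause of Lemma~\ref{DevelopabilityCondition}. Your $m=2$, $n=1$ tangent-developable example with an isolated zero-torsion ruling is a genuine instance where degree one and singularity along the striction curve both hold while $\mathit{II}$ vanishes on an entire ruling, so the relative nullity index jumps to $m$ there. The remedy you propose---pass to the open dense set where $\mathit{II}(x_0,x_0)\neq 0$, in line with the ``open and dense'' formulation of Theorem~\ref{MainTHM}---is the natural one, and is implicitly what the paper relies on as well.
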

\begin{proof}
Once and for all, suppose that $d = 1$. Then
\begin{align}\label{StrictionHypersurface}
	\begin{split}
	\dot{\beta}(t, u^{1}, \dotsc, u^{m-2}) &= \dot{\gamma}(t) + u^{1} \dot{X}_{1}(t) + \dotsb + u^{m-2} \dot{X}_{m-2}(t)\\
	& \quad + \dot{u}^{m-1}(t, u^{1}, \dotsc, u^{m-2}) X_{m-1}(t)\\
	& \quad + u^{m-1}(t, u^{1}, \dotsc, u^{m-2}) \dot{X}_{m-1}(t).
\end{split}
\end{align}
Thus, when $\sigma$ is rank-one,
\begin{equation}\label{SingularityCondition2}
\dot{X}_{j}(t) \wedge \dot{\beta}(t, u^{1}, \dotsc, u^{m-2}) \wedge X_{1}(t) \wedge \dotsb \wedge X_{m-1}(t) = 0,
\end{equation}
which, by Remark \ref{EquivalentCondition}, implies
\begin{equation*}
\dot{\beta} \wedge X_{1} \wedge \dotsb \wedge X_{m-1} = 0.
\end{equation*} 

Conversely, if $\sigma$ is singular at $\beta(t, u^{1}, \dotsc, u^{m-2})$, then \eqref{SingularityCondition2} holds. Substituting \eqref{StrictionHypersurface}, we obtain
\begin{equation*}
\dot{X}_{j}(t) \wedge \dot{\gamma}(t) \wedge X_{1}(t)\wedge \dotsb \wedge X_{m-1}(t) = 0,
\end{equation*}
as desired.
\end{proof}

\section{Proof of Theorem \ref{MainTHM}} \label{ProofMainTHM}
We may now finalize the proof of our main result, Theorem~\ref{MainTHM} in the introduction.

Suppose that $\sigma$ is a noncylindrical rank-one submanifold. Then
\begin{equation*}
\sigma(t, u^{1}, \dotsc, u^{m-1}) = \beta(t, u^{1}, \dotsc, u^{m-2}) + u^{m-1}X_{m-1}(t)
\end{equation*}
and, by Proposition~\ref{RankOneSingularPROP}, all points of the striction hypersurface $\beta$ are singular points of $\sigma$; besides, by Remark~\ref{SingularityRMK}, $\dim d\sigma = m-1$ along $\beta$.

We shall distinguish two cases:
\begin{enumerate}
\item The striction hypersurface $\beta$ is regular. Then $\sigma$ is a tangent submanifold of $\beta$.
\item The striction hypersurface is singular everywhere, i.e., $\dim d\beta_{(t,u)} = m-2$ for all $(t,u) \in I \times \mathbb{R}^{m-2}$. Then $\sigma$ is a conical submanifold.
\end{enumerate}

Conversely, if $\sigma$ is a conical or a tangent submanifold of degree one, then it is ruled and singular. Hence, by Proposition~\ref{RankOneSingularPROP}, it is rank-one.

\section*{Acknowledgments}
The author thanks Alexander Borisenko and the reference librarians both at Saint Petersburg State University and at TU Wien for valuable help in accessing reference~\cite{ushakov1993}. Thanks also to Ivan Izmestiev and Irina Markina for help in translating parts of the same reference to English; and to Joseph O'Rourke, Ruy Tojeiro, and an anonymous referee for valuable comments on the manuscript.

\bibliographystyle{amsplain}
\bibliography{biblio}

\providecommand{\bysame}{\leavevmode\hbox to3em{\hrulefill}\thinspace}
\providecommand{\MR}{\relax\ifhmode\unskip\space\fi MR }
\providecommand{\MRhref}[2]{%
  \href{http://www.ams.org/mathscinet-getitem?mr=#1}{#2}
}
\providecommand{\href}[2]{#2}
\begin{thebibliography}{10}

\bibitem{berndt2016}
J\"{u}rgen Berndt, Sergio Console, and Carlos~Enrique Olmos, \emph{Submanifolds
  and holonomy}, second ed., Monographs and Research Notes in Mathematics, CRC
  Press, Boca Raton, 2016. \MR{3468790}

\bibitem{chern1952}
Shiing-shen Chern and Nicolaas~H. Kuiper, \emph{Some theorems on the isometric
  imbedding of compact {R}iemann manifolds in euclidean space}, Ann. of Math.
  (2) \textbf{56} (1952), 422--430. \MR{50962}

\bibitem{dajczer2004}
Marcos Dajczer and Luis~A. Florit, \emph{Genuine rigidity of {E}uclidean
  submanifolds in codimension two}, Geom. Dedicata \textbf{106} (2004),
  195--210. \MR{2079843}

\bibitem{dajczer1985}
Marcos Dajczer and Detlef Gromoll, \emph{Gauss parametrizations and rigidity
  aspects of submanifolds}, J. Differential Geom. \textbf{22} (1985), no.~1,
  1--12. \MR{826420}

\bibitem{dajczer2009}
Marcos Dajczer and Pedro Morais, \emph{Isometric rigidity in codimension 2},
  Michigan Math. J. \textbf{58} (2009), no.~3, 759--770. \MR{2595563}

\bibitem{dajczer1992}
Marcos Dajczer and Ruy Tojeiro, \emph{On compositions of isometric immersions},
  J. Differential Geom. \textbf{36} (1992), no.~1, 1--18. \MR{1168979}

\bibitem{dajczer1994}
\bysame, \emph{Submanifolds with nonparallel first normal bundle}, Canad. Math.
  Bull. \textbf{37} (1994), no.~3, 330--337. \MR{1289768}

\bibitem{dajczer2019}
\bysame, \emph{Submanifold theory: {B}eyond an introduction}, Universitext,
  Springer, New York, 2019. \MR{3969932}

\bibitem{docarmo2016}
Manfredo~P. do~Carmo, \emph{Differential geometry of curves \& surfaces},
  second ed., Dover Publications, Mineola, NY, 2016. \MR{3837152}

\bibitem{erbacher1971}
Joseph Erbacher, \emph{Reduction of the codimension of an isometric immersion},
  J. Differential Geometry \textbf{5} (1971), 333--340. \MR{288701}

\bibitem{florit2017}
Luis~A. Florit and Guilherme~M. de~Freitas, \emph{Classification of codimension
  two deformations of rank two {R}iemannian manifolds}, Comm. Anal. Geom.
  \textbf{25} (2017), no.~4, 751--797. \MR{3731641}

\bibitem{hartman1965}
Philip Hartman, \emph{On isometric immersions in {E}uclidean space of manifolds
  with non-negative sectional curvatures}, Trans. Amer. Math. Soc. \textbf{115}
  (1965), 94--109. \MR{202094}

\bibitem{kuehnel2015}
Wolfgang K\"{u}hnel, \emph{Differential geometry:
  {C}urves---surfaces---manifolds}, third ed., Student Mathematical Library,
  vol.~77, American Mathematical Society, Providence, 2015. \MR{3443721}

\bibitem{lawrence2011}
Sne{\v{z}}ana Lawrence, \emph{Developable surfaces: Their history and
  application}, Nexus Netw. J. \textbf{13} (2011), 701--714.

\bibitem{lee2018}
John~M. Lee, \emph{Introduction to {R}iemannian manifolds}, second ed.,
  Graduate Texts in Mathematics, vol. 176, Springer, Cham, 2018. \MR{3887684}

\bibitem{onti2020}
Christos-Raent Onti, \emph{On complete conformally flat submanifolds with
  nullity in {E}uclidean space}, Results Math. \textbf{75} (2020), no.~3, Paper
  No. 106, 7 pp. \MR{4119521}

\bibitem{orourke2011}
Joseph O'Rourke, \emph{Developable 3-manifolds in $\mathbb{R}^4$},
  \url{https://mathoverflow.net/q/71530} (visited on June 15, 2022).

\bibitem{raffaelli202x}
Matteo Raffaelli, \emph{The geometric {C}auchy problem for rank-one
  submanifolds}, Comm. Anal. Geom., to appear.

\bibitem{rovenskii1998}
Vladimir~Y. Rovenskii, \emph{Foliations on {R}iemannian manifolds and
  submanifolds}, Birkh\"{a}user, Boston, 1998. \MR{1486826}

\bibitem{ushakov1993}
Vitaly Ushakov, \emph{{R}iemannian manifolds and surfaces of constant nullity
  \emph{(Russian)}}, Ph.D. thesis, Saint Petersburg State University, 1993.

\bibitem{ushakov1999}
\bysame, \emph{Developable surfaces in {E}uclidean space}, J. Austral. Math.
  Soc. Ser. A \textbf{66} (1999), no.~3, 388--402. \MR{1694206}

\bibitem{vittone2012}
Francisco Vittone, \emph{On the nullity distribution of a submanifold of a
  space form}, Math. Z. \textbf{272} (2012), no.~1-2, 1--16. \MR{2968211}

\end{thebibliography}
\end{document}